\newtheorem{theorem}{Theorem}[section]
\newtheorem{lemma}[theorem]{Lemma}
\newtheorem{definition}{Definition}[section]
\newenvironment{proof}[1][Proof]{\begin{trivlist}
\item[\hskip \labelsep {\bfseries #1}]}{\end{trivlist}}
\newenvironment{remark}[1][Remark]{\begin{trivlist}
\item[\hskip \labelsep {\bfseries #1}]}{\end{trivlist}}
\begin{document}

\begin{frontmatter}



\title{A note on the spectral distribution of symmetrized Toeplitz sequences }

\author{Sean Hon\corref{cor1}}
\address{Mathematical Institute, University of Oxford, Radcliffe Observatory Quarter, Oxford, OX2 6GG,
United Kingdom}
\ead{hon@maths.ox.ac.uk}
\cortext[cor1]{Corresponding author}

\author{Mohammad Ayman Mursaleen\corref{cor2}}
\address{Department of Science and high Technology, University of Insubria, Via Valleggio 11, Como, 22100,
Italy}
\ead{mamursaleen@uninsubria.it}

\author{Stefano Serra-Capizzano\corref{cor2}}
\address{Department of Science and high Technology, University of Insubria, Via Valleggio 11, Como, 22100,
Italy}
\ead{stefano.serrac@uninsubria.it}

\begin{abstract}

The singular value and spectral distribution of Toeplitz matrix sequences with Lebesgue integrable generating functions is well studied. Early results were provided in the classical Szeg{\H{o}} theorem and the Avram-Parter theorem, in which the singular value symbol coincides with the generating function. More general versions of the theorem were later proved by Zamarashkin and Tyrtyshnikov, and Tilli. Considering (real) nonsymmetric Toeplitz matrix sequences, we first symmetrize them via a simple permutation matrix and then we show that the singular value and spectral distribution of the symmetrized matrix sequence can be obtained analytically, by using the notion of approximating class of sequences. In particular, under the assumption that the symbol is sparsely vanishing, we show that roughly half of the eigenvalues of the symmetrized Toeplitz matrix (i.e. a Hankel matrix) are negative/positive for sufficiently large dimension, i.e. the matrix sequence is symmetric (asymptotically) indefinite.

\end{abstract}

\begin{keyword}
Toeplitz matrices \sep Hankel matrices \sep circulant preconditioners 

\MSC[] 15B05 \sep 65F15 \sep 65F08
\end{keyword}

\end{frontmatter}



\section{Introduction}
The singular value and spectral distribution of Toeplitz matrix sequences has been of interest over the past few decades. The earliest result on the eigenvalue distribution of Toeplitz matrices was established by Szeg{\H{o}} in \cite{MR890515}, namely the eigenvalues of the Toeplitz matrix $T_n[f]$ generated by a real-valued $f\in L^{\infty}([-\pi,\pi])$ are asymptotically distributed as $f$. Considering the same class of functions, Avram and Parter \cite{MR952991,MR851935} showed that the singular values of $T_n[f]$ are distributed as $|f|$. Tyrtyshnikov \cite{Tyrtyshnikov19961,MR1258226} later generalized the result for $T_n[f]$ generated by $f\in L^2([-\pi,\pi])$. Zamarashkin and Tyrtyshnikov \cite{MR1481397}, and Tilli \cite{MR1671591} further weakened the requirement on $f$ and showed that the same result holds for $f\in L^1([-\pi,\pi])$. Based on an approximation class sequence approach, Garoni, Serra-Capizzano, and Vassalos \cite{MR3399336} recently provided the same theorem for $f\in L^1([-\pi,\pi])$ in the framework of the newly developed theory of Generalized Locally Toeplitz (GLT) sequences \cite{MR3674485}. Moreover, Tyrtyshnikov in \cite{TYRTYSHNIKOV1994225} studied the corresponding change in the singular value and spectral distribution of Toeplitz matrix sequences when certain matrix operations are applied. In this direction, the algebra of matrix sequences generated by Toeplitz sequences is studied in \cite{CAPIZZANO2001121,glt-1,glt-2}.

Instead of directly dealing with a (real) nonsymmetric Toeplitz matrix $T_n\in \mathbb{R}^{n \times n}$, Pestana and Wathen \cite{doi:10.1137/140974213} recently suggested that one can premultiply $T_{n}$ by the anti-identity $Y_{n}$ defined as
\[  Y_{n}=\begin{bmatrix}{}
 & & 1 \\
 & \iddots & \\
1 &  & \end{bmatrix} \in \mathbb{R}^{n \times n}
\]
in order to obtain the symmetrized matrix $Y_{n}T_n$ (i.e. a Hankel matrix). Using a suitable absolute value circulant matrix $|C_n|$ as a preconditioner, they further proved that the eigenvalues of $|C_n|^{-1}Y_nT_n$ are clustered at $\pm1$, provided that certain conditions are satisfied. The same techniques were later shown to be applicable to certain Toeplitz-related matrices in \cite{Hon2018148,Hon2018}.

Considering the symmetrized Toeplitz matrix sequences $\{Y_nT_n[f]\}_n$, where $T_n[f]$ is generated by $f \in L^1[-\pi,\pi]$, we show that its singular value distribution can be obtained analytically. Moreover, while its eigenvalues are of course real, their modulus coincides with the singular value and precise information on the distribution of their sign can be provided. Specifically, we show that roughly half of the eigenvalues of $Y_nT_n[f]$ are negative/positive, when the dimension is sufficiently large and $f$ is sparsely vanishing,  i.e. its set of zeros is of (Lebesgue) measure zero. Our approach is based on the approximation class sequences introduced in the theory of GLT sequences (see the original definition in \cite{CAPIZZANO2001121} and many examples of its use in \cite{MR3674485}).

We finally stress that the spectral analysis of $\{Y_nT_n[f]\}_n$ is performed by using a new trick, whose generality goes far beyond the specific case under consideration, and such a tool can open a door to further research such as the classical eigenvalue distribution analysis.

\section{Preliminaries on Toeplitz matrices}
We assume that the given $n \times n$ Toeplitz matrix $T_{n}[f] \in \mathbb{C}^{n \times n}$ is associated with a Lebesgue integrable function $f$ via its Fourier series
\[
f(\theta)\sim \sum_{k=-\infty}^{\infty}a_{k}e^{\mathbf{i}k\theta}
\]
defined on $[-\pi,\pi]$. Thus, we have
 \[ T_{n}[f]=\begin{bmatrix}{}
a_0 & a_{-1} & \cdots & a_{-n+2} & a_{-n+1} \\
a_1 & a_0 & a_{-1}   &  & a_{-n+2} \\
\vdots & a_1 & a_0 & \ddots & \vdots \\
a_{n-2} &  & \ddots & \ddots & a_{-1} \\
a_{n-1} & a_{n-2} &\cdots & a_1 & a_0
\end{bmatrix}, \] where
\[ a_{k}=\frac{1}{2\pi} \int_{-\pi} ^{\pi}f(\theta) e^{-\mathbf{i} k \theta } \,d\theta,\quad k=0,\pm1,\pm2,\dots  \] are the Fourier coefficients of $f$. The function $f$ is called the \emph{generating function} of $T_n[f]$. If $f$ is complex-valued, then $T_n[f]$ is non-Hermitian for all $n$ sufficiently large. If $f$ is real-valued, then $T_n[f]$ is Hermitian for all $n$. If $f$ is real-valued and nonnegative, but not identically zero almost everywhere, then $T_n[f]$ is Hermitian positive definite for all $n$. If $f$ is real-valued and even, $T_n[f]$ is (real) symmetric for all $n$ \cite{MR2108963,MR2376196}.

Throughout this note, we assume that $f\in L^1([-\pi, \pi])$ and follow all standard notation and terminology introduced in \cite{MR3674485}: let $C_c(\mathbb{C})$ (or $C_c(\mathbb{R})$) be the space of complex-valued continuous functions defined on $\mathbb{C}$ (or $\mathbb{R}$) with bounded support and $\phi$ be a functional, i.e. any function defined on some vector space which takes values in $\mathbb{C}$. Also, if $g:D\subset \mathbb{R}^k \to \mathbb{K}$ ($\mathbb{R}$ or $\mathbb{C}$) is a measurable function defined on a set $D$ with $0<\mu_k(D)<\infty$, the functional $\phi_g$ is denoted such that
\[
\phi_g:C_c(\mathbb{K})\to \mathbb{C}~~\text{and}~~\phi_g(F)=\frac{1}{\mu_k(D)}\int_D F(g(\mathbf{x}))\,d\mathbf{x}.
\]

\begin{definition}\cite[Definition 3.1]{MR3674485}
Let $\{A_n\}_n$ be a matrix sequence.
\begin{enumerate}

\item We say that $\{A_n\}_n$ has an asymptotic singular value distribution described by a functional $\phi:C_c(\mathbb{R})\to \mathbb{C},$ and we write $\{A_n\}_n \sim_{\sigma}\phi,$ if
\[
\lim_{n\to \infty} \frac{1}{n}\sum_{j=1}^{n}F(\sigma_j(A_n))=\phi(F),~~\forall F \in C_c(\mathbb{R}).
\] If $\phi=\phi_{|f|}$ for some measurable $f:D \subset \mathbb{R}^k \to \mathbb{C}$ defined on a set $D$ with $0<\mu_k(D)<\infty,$ we say that $\{A_n\}_n$ has an asymptotic singular value distribution described by $f$ and we write $\{A_n\}_n \sim_{\sigma} f.$

\item We say that $\{A_n\}_n$ has an asymptotic eigenvalue (or spectral) distribution described by a function $\phi:C_c(\mathbb{R})\to \mathbb{C},$ and we write $\{A_n\}_n \sim_{\lambda}\phi,$ if
\[
\lim_{n\to \infty} \frac{1}{n}\sum_{j=1}^{n}F(\lambda_j(A_n))=\phi(F),~~\forall F \in C_c(\mathbb{C}).
\] If $\phi=\phi_{f}$ for some measurable $f:D \subset \mathbb{R}^k \to \mathbb{C}$ defined on a set $D$ with $0<\mu_k(D)<\infty,$ we say that $\{A_n\}_n$ has an asymptotic eigenvalue (or spectral) distribution described by  $f$ and we write $\{A_n\}_n \sim_{\lambda} f.$
\item Let $\{A_{n}\}_{n}$ be a matrix-sequence. We say that $\{A_{n}\}_{n}$ is
\textit{sparsely vanishing (s.v.)} if for every $M>0$ there exists $n_{M}$
such that, for $n\geq n_{M}$,
\[
\frac{\#\left\{ i\in \{1,...,n\}:\sigma _{i}(A_{n})<1/M\right\} }{n}
\leq r(M)
\]
where\ $\lim_{M\rightarrow \infty }r(M)=0.$

Note that $\{A_{n}\}_{n}$ is \textit{sparsely vanishing if and only} if
\[
\lim_{M\rightarrow \infty }\lim \sup_{n\rightarrow \infty }\frac{\#\left\{
i\in \{1,...,n\}:\sigma _{i}(A_{n})<1/M\right\} }{n}=0,
\]%
i.e.
\[
\lim_{M\rightarrow \infty }\lim \sup_{n\rightarrow \infty }\frac{1}{n}
\sum_{i=1}^{n}\chi _{\lbrack 0,1/M)}\left( \sigma _{i}(A_{n})\right) =0.
\]
Finally we say that $\{A_{n}\}_{n}$ is \textit{sparsely vanishing (s.v.) in the sense of the eigenvalues} if in the previous two displayed equations the quantity $\sigma _{i}(A_{n})$ is replaced by $|\lambda _{i}(A_{n})|$ for $i=1,\ldots,n$.
\end{enumerate}
\end{definition}

The following result holds (see e.g. \cite{MR3674485}).

\begin{theorem}\label{lem:sparsely_vanishing}
The following statements are true.
\begin{enumerate}
\item
Assume $\{A_n\}_n \sim_{\sigma} f.$  Then $\{A_n\}_n$ is sparsely vanishing if and only if $f$ is sparsely vanishing.
\item
Assume $\{A_n\}_n \sim_{\lambda} f.$  Then $\{A_n\}_n$ is sparsely vanishing in the eigenvalues sense if and only if $f$ is sparsely vanishing.
\item Assume $\{A_n\}_n$ is given and assume that every matrix $A_n$ is normal. Then $\{A_n\}_n$ is sparsely vanishing if and only if  $\{A_n\}_n$ is sparsely vanishing in the eigenvalues sense.
\end{enumerate}
\end{theorem}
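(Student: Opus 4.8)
The plan is to reduce all three statements to the definition of sparse vanishing, whose only subtlety is that it is phrased through the discontinuous indicator $\chi_{[0,1/M)}$; the standard device is to trap this indicator between continuous, compactly supported test functions and then feed those functions to the distributional hypotheses. First I would fix the meaning of sparse vanishing for the symbol: a measurable $f:D\to\mathbb{C}$ is sparsely vanishing iff $\mu_k\{\mathbf{x}\in D:f(\mathbf{x})=0\}=0$, which, since $\{|f|<1/M\}\downarrow\{f=0\}$ as $M\to\infty$, is equivalent to $\lim_{M\to\infty}\mu_k\{\mathbf{x}\in D:|f(\mathbf{x})|<1/M\}/\mu_k(D)=0$.

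For item 1, assuming $\{A_n\}_n\sim_\sigma f$, I would argue both directions by sandwiching. Fix $M$ and pick $F_M\in C_c(\mathbb{R})$ with $0\le F_M\le1$, $F_M\equiv1$ on $[0,1/M]$ and $\operatorname{supp}F_M\subseteq[-1,2/M]$; then
\[
\limsup_{n\to\infty}\frac1n\#\{i:\sigma_i(A_n)<1/M\}\le\lim_{n\to\infty}\frac1n\sum_{i=1}^nF_M(\sigma_i(A_n))=\frac{1}{\mu_k(D)}\int_DF_M(|f|)\,d\mathbf{x}\le\frac{\mu_k\{|f|<2/M\}}{\mu_k(D)},
\]
and letting $M\to\infty$ shows that $f$ sparsely vanishing implies $\{A_n\}_n$ sparsely vanishing. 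Conversely, if $\mu_k\{f=0\}=c>0$, choosing $G_M\in C_c(\mathbb{R})$ with $0\le G_M\le1$, $G_M\equiv1$ on $[0,1/(2M)]$ and $\operatorname{supp}G_M\subseteq(-1/M,1/M)$ yields $\limsup_{n\to\infty}\frac1n\#\{i:\sigma_i(A_n)<1/M\}\ge\frac{1}{\mu_k(D)}\int_DG_M(|f|)\,d\mathbf{x}\ge c/\mu_k(D)>0$ for every $M$, so $\{A_n\}_n$ fails to be sparsely vanishing; this gives the equivalence.

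Item 2 follows from the same computation once one notes that $\{A_n\}_n\sim_\lambda f$ tests against every $F\in C_c(\mathbb{C})$: applying it to $F(z)=F_M(|z|)$ and to $F(z)=G_M(|z|)$ simply replaces $\sigma_i(A_n)$ by $|\lambda_i(A_n)|$ while leaving $|f|$ untouched, and the two displayed inequalities carry over verbatim with $|\lambda_i(A_n)|$ in place of $\sigma_i(A_n)$. Item 3 is immediate, since a normal matrix has $\sigma_i(A_n)=|\lambda_i(A_n)|$ up to reordering, whence $\#\{i:\sigma_i(A_n)<1/M\}=\#\{i:|\lambda_i(A_n)|<1/M\}$ for all $n$ and $M$ and the two forms of sparse vanishing coincide term by term. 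I do not expect a genuine obstacle here — this is essentially folklore — but the point requiring care is the order of limits ($n\to\infty$ first, then $M\to\infty$) together with the precise shape of $F_M$ and $G_M$, so that the upper and lower bounds land on $\mu_k\{|f|<2/M\}$ and $\mu_k\{f=0\}$ respectively; the bounded-support requirement forces the mildly asymmetric supports above, but since singular values and the moduli $|\lambda_i(A_n)|$ are nonnegative, the behaviour of $F_M,G_M$ on the negative axis is irrelevant.
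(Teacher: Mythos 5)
The paper states this theorem without proof, merely citing Garoni and Serra-Capizzano's book, so there is no in-paper argument to compare against. Your reconstruction is nonetheless correct and uses the standard technique one would expect: trap the discontinuous indicator $\chi_{[0,1/M)}$ between compactly supported continuous bump functions, pass to the limit in $n$ via the distributional hypothesis, and then send $M\to\infty$ using continuity of the measure from above (valid because $\mu_k(D)<\infty$) to collapse $\mu_k\{|f|<2/M\}$ down to $\mu_k\{f=0\}$. All the ingredients check out: $F_M$ dominates and $G_M$ minorizes the indicator on the nonnegative axis, which is all that matters since singular values and $|\lambda_i|$ are nonnegative; the passage from $C_c(\mathbb{R})$ to $C_c(\mathbb{C})$ via $z\mapsto F_M(|z|)$ handles item 2 with no extra work; and normality gives $\sigma_i(A_n)=|\lambda_i(A_n)|$ exactly, settling item 3 termwise. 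One cosmetic remark: in the converse direction of item 1, the sandwich actually delivers the a priori stronger statement with $\liminf_{n\to\infty}$ on the left-hand side; writing $\limsup$ as you did is what the definition of sparse vanishing requires, so the conclusion follows either way, but it is worth being aware that you are discarding a little information there.
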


The generalized Szeg{\H{o}} theorem that describes the singular value and spectral distribution of Toeplitz sequences is given as follows:

\begin{theorem}\cite[Theorem 6.5]{MR3674485}\label{lem:main}
Suppose $f \in L^{1}([-\pi,\pi])$. Let $T_n[f]$ be the Toeplitz matrix generated by $f$. Then $$\{T_n[f]\}_n \sim_{\sigma} f.$$ If moreover $f$ is real-valued, then
$$\{T_n[f]\}_n \sim_{\lambda} f.$$
\end{theorem}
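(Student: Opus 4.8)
The plan is to prove the result by the approximating class of sequences (a.c.s.) technique of \cite{CAPIZZANO2001121,MR3674485}, reducing the general $L^1$ symbol to the elementary case of trigonometric polynomials. First I would settle the base case: if $p(\theta)=\sum_{|k|\le r}c_k e^{\mathbf{i}k\theta}$ is a trigonometric polynomial, then $T_n[p]$ is banded with bandwidth $r$ and differs from the circulant matrix $C_n[p]$ (whose eigenvalues are $p(2\pi j/n)$, $j=0,\dots,n-1$, with the Fourier matrix as eigenvector matrix) only by a correction $R_n=T_n[p]-C_n[p]$ supported in the top-right and bottom-left corners, so that $\operatorname{rank}(R_n)\le 2r$ independently of $n$ while $\|R_n\|\le 2\|p\|_\infty$. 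Since $F\circ p$ is Riemann integrable, a Riemann-sum argument gives $\frac1n\sum_{j=0}^{n-1}F(p(2\pi j/n))\to\frac1{2\pi}\int_{-\pi}^{\pi}F(p(\theta))\,d\theta=\phi_p(F)$ for every $F\in C_c(\mathbb{C})$; as $C_n[p]$ is normal this yields $\{C_n[p]\}_n\sim_\sigma p$ and $\{C_n[p]\}_n\sim_\lambda p$. A perturbation of uniformly bounded spectral norm and $o(n)$ rank alters neither the singular value nor (in the Hermitian case) the eigenvalue distribution, so $\{T_n[p]\}_n\sim_\sigma p$, and $\{T_n[p]\}_n\sim_\lambda p$ when $p$ is real-valued (then $C_n[p]$ and $T_n[p]$ are Hermitian).

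Next I would establish the a.c.s. behaviour of the Toeplitz truncation operator. The key estimate is the trace-norm bound $\|T_n[g]\|_1\le\frac{n}{2\pi}\|g\|_{L^1}$ for $g\ge 0$ (in which case $T_n[g]$ is positive semidefinite, so $\|T_n[g]\|_1=\operatorname{tr}T_n[g]=n\,a_0(g)$), extended to an arbitrary $g\in L^1$ by decomposing into positive/negative and real/imaginary parts at the cost of a universal constant. Given this, set $\varepsilon:=\|g\|_{L^1}$ and order the singular values $\sigma_1\ge\cdots\ge\sigma_n$ of $T_n[g]$: splitting the SVD into the block with $\sigma_i>\varepsilon^{1/2}$ and the remaining block produces $T_n[g]=R_n+N_n$ with $\|N_n\|\le\varepsilon^{1/2}$ and $\operatorname{rank}(R_n)\le\varepsilon^{-1/2}\sum_i\sigma_i\le C\varepsilon^{1/2}n$, both quantities tending to $0$ as $\varepsilon\to0$.

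Finally, I would take $p_m$ to be the $m$-th Fej\'er (Ces\`aro) mean of $f$, so that $p_m\to f$ in $L^1([-\pi,\pi])$, hence in measure, and $p_m$ is real-valued whenever $f$ is, because the Fej\'er kernel is nonnegative and even. Applying the previous step to $g=f-p_m$ shows that $\{T_n[p_m]\}_n$ is an a.c.s. for $\{T_n[f]\}_n$. Combining this with the base case $\{T_n[p_m]\}_n\sim_\sigma p_m$ and with $p_m\to f$ in measure, the a.c.s.--distribution transfer theorem gives $\{T_n[f]\}_n\sim_\sigma f$; when $f$ is real-valued, all matrices $T_n[f]$ and $T_n[p_m]$ are Hermitian, and the Hermitian a.c.s.--distribution transfer theorem applied to $\{T_n[p_m]\}_n\sim_\lambda p_m$ yields $\{T_n[f]\}_n\sim_\lambda f$.

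The main obstacle is the second step: one must control $T_n[f-p_m]$ even though $f-p_m$ need not be bounded, so the naive bound $\|T_n[f-p_m]\|\le\|f-p_m\|_\infty$ is unavailable. The trace-norm estimate, together with the SVD splitting that converts ``small trace norm'' into ``small rank plus small spectral norm'', is exactly what repairs this; carrying it out carefully---in particular tracking the constants for complex, sign-changing symbols and checking that the resulting rank and norm bounds are uniform in $n$---is the heart of the argument, while the trigonometric-polynomial case and the invocation of the a.c.s. machinery are routine.
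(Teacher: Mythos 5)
The paper does not prove this statement itself: it is quoted verbatim as Theorem 6.5 of the GLT monograph (reference \cite{MR3674485}) and invoked as a black box. The fragments of that cited proof which the paper does allude to inside the proof of its own Theorem \ref{thm:main_theorem} --- namely $\{C_n[p_M]\}_n\sim_\sigma p_M$ for trigonometric polynomials via Strang circulants, $\{C_n[p_M]\}_n$ being an a.c.s.\ for $\{T_n[p_M]\}_n$, and $\{T_n[p_M]\}_n$ being an a.c.s.\ for $\{T_n[f]\}_n$, closed by the transfer result of Lemma \ref{lem:Corollary5.1} --- coincide with the three stages of your proposal, so you have in effect reconstructed the reference argument rather than produced an alternative.

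Your argument is sound. The genuinely nontrivial step is passing from the polynomial case to $L^1$, and you handle it exactly as in the source: the trace-norm inequality $\|T_n[g]\|_1\le\frac{n}{2\pi}\|g\|_{L^1}$ for $g\ge0$ (where the trace norm equals the trace), extended to signed and complex $g$ by the obvious decomposition at the cost of a fixed constant, followed by the SVD split at the threshold $\|g\|_{L^1}^{1/2}$ that converts small trace norm into small rank plus small spectral norm. The Fej\'er-mean choice of $p_m$ is also the standard one, and you are right to emphasize that $p_m$ inherits real-valuedness from $f$, which is what the Hermitian transfer lemma requires. Two minor points could be sharpened in a full write-up: in the polynomial base case a constant-rank perturbation already preserves the distribution, so the bound $\|R_n\|\le 2\|p\|_\infty$ is not needed; and in the splitting you should make explicit that the number of singular values exceeding $\varepsilon^{1/2}$ is at most $\varepsilon^{-1/2}\|T_n[g]\|_1\le C\varepsilon^{1/2}n$, so both the rank fraction and the norm tail go to zero as $\varepsilon=\|f-p_m\|_{L^1}\to0$.
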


We introduce the following definitions and lemma in order to prove our claim on $\{Y_nT_n[f]\}_n$.
\begin{definition}\cite[Definition 5.1]{MR3674485}\label{def:ACS}
Let $\{A_n\}_n$ be a matrix sequence and let $\{\{B_{n,m}\}_n\}_m$ be a sequence of matrix sequences. We say that $\{\{B_{n,m}\}_n\}_m$ is an \textit{approximating class of sequences (a.c.s)} for $\{A_n\}_n$ if the following condition is met: for every $m$ there exists $n_m$ such that, for $n \geq n_m$,
\[
A_n=B_{n,m}+R_{n,m}+N_{n,m},
\]
\[\text{rank}~R_{n,m}\leq c(m)n~~\text{and}~~\|N_{n,m}\|\leq\omega(m),
\]
where $n_m$, $c(m)$ and $\omega(m)$ depend only on $m$, and \[\lim_{m\to\infty}c(m)=\lim_{m\to\infty}\omega(m)=0.\]
\end{definition}

We use $\{B_{n,m}\}_n\xrightarrow{\text{a.c.s.\ wrt\ $m$}}\{A_n\}_n$ to denote that $\{\{B_{n,m}\}_n\}_m$ is an a.c.s for $\{A_n\}_n$.

\begin{definition}
Let $f_m,f:D \subset \mathbb{R}^k \to \mathbb{C}$ be measurable functions. We say that $f_m \to f$ in measure if, for every $\epsilon > 0$,
\[
\lim_{m \to \infty} \mu_k\{|f_m-f|>\epsilon\}=0.
\]
\end{definition}

\begin{lemma}\cite[Corollary 5.1]{MR3674485}\label{lem:Corollary5.1}
Let $\{A_n\}_n, \{B_{n,m}\}_n$ be matrix sequences and let $f,f_m:D \subset \mathbb{R}^k \to \mathbb{C}$ be measurable functions defined on a set $D$ with $0<\mu_k(D)<\infty$. Suppose that

\begin{enumerate}
\item $\{B_{n,m}\}_n \sim_{\sigma}  f_m$ for every $m$,
\item $\{B_{n,m}\}_n\xrightarrow{\text{a.c.s.\ wrt\ $m$}}\{A_n\}_n$,
\item $f_m \to f$ in measure.
\end{enumerate}

Then $\{A_{n}\}_n \sim_{\sigma}  f$.
\\

Moreover, if the first assumption is replaced by $\{B_{n,m}\}_n \sim_{\lambda}  f_m$ for every $m$, the other two assumptions are left unchanged, and all the involved matrices are Hermitian, then  $\{A_{n}\}_n \sim_{\lambda}  f$.
\end{lemma}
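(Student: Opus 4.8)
The plan is to prove the singular value assertion and then obtain the eigenvalue one by running the same argument with $\sigma_j$ replaced by $\lambda_j$. Fix $F\in C_c(\mathbb{R})$; by the definition of $\{A_n\}_n\sim_\sigma f$ it suffices to show $\frac1n\sum_{j=1}^nF(\sigma_j(A_n))\to\phi_{|f|}(F)$. For each $m$ set $C_{n,m}:=B_{n,m}+N_{n,m}$; then Definition~\ref{def:ACS} gives, for $n\ge n_m$,
\[
A_n=C_{n,m}+R_{n,m},\qquad \mathrm{rank}\,R_{n,m}\le c(m)\,n,\qquad \|C_{n,m}-B_{n,m}\|=\|N_{n,m}\|\le\omega(m),
\]
and I would bound, by the triangle inequality,
\[
\Bigl|\tfrac1n\sum_{j=1}^nF(\sigma_j(A_n))-\phi_{|f|}(F)\Bigr|\ \le\ \mathrm{(I)}+\mathrm{(II)}+\mathrm{(III)}+\mathrm{(IV)},
\]
where $\mathrm{(I)}=\bigl|\tfrac1n\sum_jF(\sigma_j(A_n))-\tfrac1n\sum_jF(\sigma_j(C_{n,m}))\bigr|$, $\mathrm{(II)}=\bigl|\tfrac1n\sum_jF(\sigma_j(C_{n,m}))-\tfrac1n\sum_jF(\sigma_j(B_{n,m}))\bigr|$, $\mathrm{(III)}=\bigl|\tfrac1n\sum_jF(\sigma_j(B_{n,m}))-\phi_{|f_m|}(F)\bigr|$, and $\mathrm{(IV)}=\bigl|\phi_{|f_m|}(F)-\phi_{|f|}(F)\bigr|$.

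For $\mathrm{(II)}$ I would invoke Weyl's perturbation bound for singular values: $|\sigma_j(C_{n,m})-\sigma_j(B_{n,m})|\le\|C_{n,m}-B_{n,m}\|\le\omega(m)$ for every $j$, so uniform continuity of $F$ gives $\mathrm{(II)}\le\varpi_F(\omega(m))$, where $\varpi_F$ is the modulus of continuity of $F$. For $\mathrm{(I)}$, put $r_n:=\mathrm{rank}\,R_{n,m}\le c(m)n$. The subadditivity $\sigma_{i+j-1}(X+Y)\le\sigma_i(X)+\sigma_j(Y)$, applied with $\sigma_{r_n+1}(\pm R_{n,m})=0$, yields the interlacing $\sigma_{j+r_n}(C_{n,m})\le\sigma_j(A_n)\le\sigma_{j-r_n}(C_{n,m})$, hence $\bigl|\#\{j:\sigma_j(A_n)\le x\}-\#\{j:\sigma_j(C_{n,m})\le x\}\bigr|\le r_n$ for every $x$; that is, the two empirical distribution functions of the singular values differ by at most $c(m)$, uniformly in $x$. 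Approximating $F$ uniformly by a compactly supported piecewise–linear (hence bounded variation) function $\widetilde F$ with $\|F-\widetilde F\|_\infty<\varepsilon$, and using the elementary bound $\bigl|\int g\,d\nu\bigr|\le V(g)\sup_x|\nu((-\infty,x])|$ valid for any bounded-variation $g$ and any finite signed measure $\nu$ with $\nu(\mathbb{R})=0$, I obtain $\mathrm{(I)}\le 2\varepsilon+c(m)\,V(\widetilde F)$ for $n\ge n_m$.

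By assumption~(1), $\{B_{n,m}\}_n\sim_\sigma f_m$ is exactly the statement that $\mathrm{(III)}\to0$ as $n\to\infty$ for each fixed $m$. For $\mathrm{(IV)}$, from $f_m\to f$ in measure and $\bigl|\,|f_m|-|f|\,\bigr|\le|f_m-f|$ we get $|f_m|\to|f|$ in measure; since $F$ is bounded and uniformly continuous this forces $F(|f_m|)\to F(|f|)$ in measure, and as $\mu_k(D)<\infty$ and $|F(|f_m|)|\le\|F\|_\infty$, the dominated convergence theorem (applied along arbitrary subsequences) gives $\int_DF(|f_m|)\to\int_DF(|f|)$, i.e. $\mathrm{(IV)}\to0$ as $m\to\infty$. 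Taking now $\limsup_{n\to\infty}$ in the displayed inequality makes $\mathrm{(III)}$ disappear and leaves $\limsup_n|\cdots|\le\varpi_F(\omega(m))+2\varepsilon+c(m)V(\widetilde F)+\mathrm{(IV)}$; letting $m\to\infty$ (recall $c(m),\omega(m)\to0$) kills the first, third and fourth summands, and then $\varepsilon\to0$ yields $\frac1n\sum_jF(\sigma_j(A_n))\to\phi_{|f|}(F)$, i.e. $\{A_n\}_n\sim_\sigma f$. The eigenvalue case is identical after replacing $\sigma_j$ by $\lambda_j$ and $|f_m|,|f|$ by $f_m,f$: Weyl's inequality $|\lambda_j(C_{n,m})-\lambda_j(B_{n,m})|\le\|C_{n,m}-B_{n,m}\|$ and the interlacing $\lambda_{j+r_n}(C_{n,m})\le\lambda_j(A_n)\le\lambda_{j-r_n}(C_{n,m})$ both require the matrices to be Hermitian, which is precisely the extra hypothesis assumed there (so that $C_{n,m}$ and $R_{n,m}=A_n-C_{n,m}$ are Hermitian as well).

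I expect the only genuinely delicate point to be $\mathrm{(I)}$. A rank-$r_n$ perturbation with $r_n=\Theta(n)$ can move a positive fraction of the singular values, so for a test function $F$ that is neither monotone nor of bounded variation the naive termwise estimate $\sum_j|F(\sigma_j(A_n))-F(\sigma_j(C_{n,m}))|$ is useless; the remedy is to compare the \emph{empirical distribution functions} of the singular values (which any rank-$r_n$ perturbation shifts by at most $r_n/n$, uniformly in $x$) and to couple this with a uniform bounded-variation approximation of $F$. Everything else — the Weyl inequalities, the convergence-in-measure/dominated-convergence argument for $\mathrm{(IV)}$, and the bookkeeping of the iterated limits in $n$, $m$ and $\varepsilon$ — is routine.
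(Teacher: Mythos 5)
The paper does not prove this lemma; it is imported verbatim as \cite[Corollary~5.1]{MR3674485}, so there is no in-paper proof to compare against. Your argument is nevertheless the standard one for this a.c.s.\ convergence result, and it is essentially correct: the four-term splitting, the Weyl/Mirsky bound $|\sigma_j(C_{n,m})-\sigma_j(B_{n,m})|\le\|N_{n,m}\|$, the rank-$r_n$ interlacing of singular values leading to a uniform $r_n/n$ bound on the difference of the two empirical distribution functions, the integration-by-parts estimate against a bounded-variation uniform approximant of $F$, the dominated-convergence handling of $\phi_{|f_m|}(F)\to\phi_{|f|}(F)$ from convergence in measure, and the $n\to\infty$, $m\to\infty$, $\varepsilon\to 0$ ordering are all sound. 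You correctly identify term~(I) as the delicate one and handle it with the right tool (empirical CDF comparison rather than termwise estimates).

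One small point deserves tightening in the Hermitian case. The hypothesis ``all the involved matrices are Hermitian'' is normally understood as saying that $A_n$ and $B_{n,m}$ are Hermitian; the definition of a.c.s.\ does not by itself force $R_{n,m}$ and $N_{n,m}$ to be Hermitian, and hence neither is $C_{n,m}=B_{n,m}+N_{n,m}$ automatically Hermitian, as your parenthetical assumes. The fix is standard but should be stated: replace $R_{n,m}$ and $N_{n,m}$ by their Hermitian parts $\tfrac12(R_{n,m}+R_{n,m}^*)$ and $\tfrac12(N_{n,m}+N_{n,m}^*)$. Since $A_n-B_{n,m}$ is Hermitian, these still sum to $A_n-B_{n,m}$; the rank at most doubles (so the rank bound becomes $2c(m)n$, still $o(n)$ as $m\to\infty$), and the operator norm of the Hermitian part does not exceed that of $N_{n,m}$. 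With this symmetrization your eigenvalue argument goes through unchanged.
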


\section{Preliminaries on matrix analysis}

The following definitions and lemma will be used in the proof of our main result.

\begin{definition}\cite{doi:10.1137/140974213}
For any circulant matrix $C_n \in \mathbb{C}^{n\times n}$, the \emph{absolute value circulant matrix} $|C_n|$ of $C_n$ is defined by
\begin{eqnarray}\nonumber
|C_n|&=& (C_n^* C_n)^{1/2}\\\nonumber
&=&(C_n C_n^*)^{1/2}\\\nonumber
&=&F_n|\Lambda_n|F_n^*,
\end{eqnarray} where 
$F_n = \left(\frac{\omega^{jk}}{\sqrt{n}} \right)_{j,k=0}^{n-1},~ \omega=e^{-\mathbf{i}{2\pi\over n}}$, and $|\Lambda_n|$ is the diagonal matrix in the eigendecomposition of $C_n$ with all entries replaced by their magnitude.
\end{definition}

\begin{remark}
By definition, $|C_n|$ is Hermitian positive definite provided that $C_n$ is nonsingular.
\end{remark}

\begin{definition}\cite[Definition 1.3.16]{MR2978290}
A \textit{family} $\mathfrak{F} \subseteq \mathbb{C}^{n \times n}$ of matrices is an arbitrary (finite or infinite) set of matrices, and a \textit{commuting family} is one in which each pair in the set commutes under multiplication.
\end{definition}

\begin{theorem}\cite[Theorem 4.1.6]{MR2978290}\label{lem:cmsd}
Let $\mathfrak{F}$ be a given family of Hermitian matrices. There exists a unitary matrix $U_n$ such that $U_n^*A_nU_n$ is diagonal for all $A_n \in \mathfrak{F}$ if and only if $A_n B_n=B_n A_n$ for all $A_n, B_n \in \mathfrak{F}$.
\end{theorem}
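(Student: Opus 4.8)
The ``only if'' direction is immediate, and I would dispatch it first: if $U_n^*A_nU_n$ is diagonal for every $A_n\in\mathfrak{F}$, then for any $A_n,B_n\in\mathfrak{F}$ the two diagonal matrices $U_n^*A_nU_n$ and $U_n^*B_nU_n$ commute, so conjugating back by $U_n$ yields $A_nB_n=B_nA_n$. The substance is the ``if'' direction, where my plan has two stages: reduce to a finite family, then induct.

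First I would reduce to the case of a finite family. The real vector space $\mathcal{H}_n$ of $n\times n$ Hermitian matrices has dimension $n^2<\infty$, so the real linear span $\mathcal{V}\subseteq\mathcal{H}_n$ of $\mathfrak{F}$ admits a finite basis $A^{(1)},\dots,A^{(r)}\in\mathcal{V}$. Since the commutator $[X,Y]=XY-YX$ is bilinear and vanishes on every pair drawn from $\mathfrak{F}$, it vanishes on every pair drawn from $\mathcal{V}$; in particular $A^{(1)},\dots,A^{(r)}$ pairwise commute. A unitary $U_n$ for which $U_n^*A^{(k)}U_n$ is diagonal for $k=1,\dots,r$ makes $U_n^*XU_n$ diagonal for every $X\in\mathcal{V}$, hence for every $X\in\mathfrak{F}$; thus it suffices to treat the finite commuting family $\{A^{(1)},\dots,A^{(r)}\}$ of Hermitian matrices.

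Then I would induct on $r$. The base case $r=1$ is the spectral theorem for Hermitian matrices. For the inductive step I would choose, by the inductive hypothesis, a unitary $V$ with $B^{(k)}:=V^*A^{(k)}V$ diagonal for $k=1,\dots,r-1$, and set $B^{(r)}:=V^*A^{(r)}V$, which is Hermitian and commutes with each $B^{(k)}$. Partition $\{1,\dots,n\}$ so that $i$ and $j$ lie in the same block exactly when $(B^{(1)}_{ii},\dots,B^{(r-1)}_{ii})=(B^{(1)}_{jj},\dots,B^{(r-1)}_{jj})$. Using the elementary fact that any $M$ commuting with $\operatorname{diag}(d_1,\dots,d_n)$ satisfies $M_{ij}(d_i-d_j)=0$, applied to each $B^{(k)}$, I obtain that $B^{(r)}$ is block diagonal with respect to this partition; each diagonal block is Hermitian, so the spectral theorem supplies per-block diagonalizing unitaries whose direct sum $W$ makes $W^*B^{(r)}W$ diagonal. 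Because $W$ respects the partition and each $B^{(k)}$ with $k\le r-1$ is a scalar multiple of the identity on every block, $W^*B^{(k)}W=B^{(k)}$ remains diagonal; hence $U_n:=VW$ simultaneously diagonalizes $A^{(1)},\dots,A^{(r)}$, completing the induction.

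I do not expect a genuine obstacle here. The two points that need care are the passage from a possibly infinite family to a finite one --- which rests only on $\dim\mathcal{H}_n=n^2<\infty$ together with bilinearity of the commutator --- and the bookkeeping in the inductive step, namely verifying that the block-diagonal unitary $W$ used to finish off $B^{(r)}$ leaves the already-achieved diagonalizations of $B^{(1)},\dots,B^{(r-1)}$ intact. As an alternative to the induction on $r$ one could instead induct on the dimension $n$: if some member of the family fails to be a scalar matrix, its distinct eigenspaces are proper subspaces invariant under the whole commuting family, and one applies the inductive hypothesis to the restricted (still Hermitian, still commuting) families on each eigenspace.
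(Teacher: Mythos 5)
The paper states this result only as a citation to Horn and Johnson (Theorem 4.1.6) and supplies no proof of its own, so there is nothing internal to compare your argument against. Your proof is correct. The reduction from a possibly infinite commuting family of Hermitian matrices to a finite one --- via the finite-dimensionality of the real span inside the space of $n\times n$ Hermitian matrices, together with bilinearity of the commutator --- is a genuine step that informal accounts often skip. The induction on the size $r$ of the finite family is also carried out carefully: you identify the block partition determined by the tuples of diagonal entries of the already-diagonalized $B^{(1)},\dots,B^{(r-1)}$, observe that $B^{(r)}$ must be block-diagonal with respect to it because a matrix commuting with $\operatorname{diag}(d_1,\dots,d_n)$ has zero $(i,j)$ entry whenever $d_i\neq d_j$, and verify that the per-block diagonalizing unitary $W$ leaves each $B^{(k)}$ with $k<r$ fixed since each such $B^{(k)}$ is a scalar multiple of the identity on every block. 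The alternative you mention at the end --- induction on the dimension $n$ by restricting the whole commuting family to the eigenspaces of any non-scalar member --- is in fact closer to the argument Horn and Johnson themselves present, but your chosen route is equally standard and equally valid.
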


\section{Main results}

We are now ready to provide our main result on the singular and spectral distribution of $\{Y_nT_n[f]\}_n$.

In Theorem \ref{thm:main_theorem}, we furnish the singular value distribution of $\{Y_nT_n[f]\}_n$ and the asymptotic inertia of $Y_nT_n[f]$ that is an evaluation of the number of positive, negative, and zero eigenvalues.

The section is concluded by a few comments and remarks on the impact of the result.

\begin{theorem}\label{thm:main_theorem}
Suppose $f \in L^1([-\pi,\pi])$ with real Fourier coefficients and $Y_n \in \mathbb{R}^{n \times n}$ is the anti-identity matrix. Let $T_n[f]\in \mathbb{R}^{n \times n}$ be the Toeplitz matrix generated by $f$. Then $$\{Y_nT_n[f]\}_n \sim_{\sigma}  f.$$ Moreover, $Y_nT_n[f]$ is (real) symmetric and if $f$ is sparsely vanishing then
\[
|n^{+}(Y_nT_n[f])-n^{-}({Y_nT_n[f]})|=o(n),
\]
with $n^{+}(\cdot)$ and $n^{-}(\cdot)$ denoting the number of positive and the negative eigenvalues of its argument, respectively.
\end{theorem}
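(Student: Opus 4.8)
The plan is to dispatch the three assertions in order of increasing difficulty, the last one being the substantive part.

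\textbf{Singular values and symmetry.} Since $Y_n$ is orthogonal, $\sigma_j(Y_nT_n[f])=\sigma_j(T_n[f])$ for every $j$, so $\{Y_nT_n[f]\}_n\sim_\sigma f$ follows at once from Theorem~\ref{lem:main}. The entry $(j,k)$ of $Y_nT_n[f]$ equals $a_{n+1-j-k}$, so $Y_nT_n[f]$ is a Hankel matrix, and since the Fourier coefficients are real it is \emph{real symmetric}; hence its eigenvalues are real and $|\lambda_j(Y_nT_n[f])|=\sigma_j(Y_nT_n[f])=\sigma_j(T_n[f])$ for all $j$. Together with Theorem~\ref{lem:sparsely_vanishing} and the sparse vanishing of $f$, this already gives $\limsup_n\frac1n\#\{j:|\lambda_j(Y_nT_n[f])|<1/M\}\le r(M)\to0$, i.e. only $o(n)$ eigenvalues accumulate at the origin; what remains is to control the \emph{sign} of the $\approx n$ eigenvalues bounded away from $0$.

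\textbf{The model case (trigonometric polynomials).} Fix a trigonometric polynomial $p$ of degree $d$ with real Fourier coefficients and let $C_n[p]$ be the natural circulant completion of the banded matrix $T_n[p]$, so that $E_n:=T_n[p]-C_n[p]$ is supported in the two $d\times d$ corners, $\operatorname{rank}E_n\le 2d$, and $C_n[p]$ is a real circulant. Using $Y_nC_n[p]Y_n=C_n[p]^{\mathsf T}$ one gets $(Y_nC_n[p])^2=C_n[p]^{\mathsf T}C_n[p]=|C_n[p]|^2$; it is a standard fact about retrocirculant (left-circulant) matrices that, for real $C_n[p]$, the eigenvalues of $Y_nC_n[p]$ are the values $\pm|\hat p(\theta_j)|$ on the Fourier grid, occurring in opposite-sign pairs with at most two real unpaired exceptions (the modes $\theta=0$ and, for $n$ even, $\theta=\pi$), so that $|n^+(Y_nC_n[p])-n^-(Y_nC_n[p])|\le 2$. (Equivalently, $Y_n$ and $|C_n[p]|$ commute and are symmetric, so Theorem~\ref{lem:cmsd} applies.) As $Y_nE_n=Y_nT_n[p]-Y_nC_n[p]$ is Hermitian of rank $\le 2d$, and a Hermitian rank-$r$ perturbation changes $n^\pm$ by at most $r$, we obtain
\[
|n^+(Y_nT_n[p])-n^-(Y_nT_n[p])|\le 4d+2=:t(p),
\]
a bound uniform in $n$ (though not in $\deg p$).

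\textbf{Passing to a general symbol.} Let $f_m$ be the $m$-th Cesàro (Fejér) mean of $f$: a trigonometric polynomial with real Fourier coefficients, $f_m\to f$ in $L^1$ hence in measure. By the a.c.s.\ theory (Definition~\ref{def:ACS}, as used in the proof of Theorem~\ref{lem:main}), $\{T_n[f_m]\}_n\xrightarrow{\text{a.c.s.\ wrt }m}\{T_n[f]\}_n$; premultiplying by the orthogonal $Y_n$ (which alters neither rank nor norm) and taking Hermitian parts of the error terms (legitimate since $Y_nT_n[f]$ and $Y_nT_n[f_m]$ are Hermitian) yields Hermitian $\widehat R_{n,m},\widehat N_{n,m}$ with $\operatorname{rank}\widehat R_{n,m}\le 2c(m)n$, $\|\widehat N_{n,m}\|\le\omega(m)$, $c(m),\omega(m)\to0$, and
\[
Y_nT_n[f]=\bigl(Y_nT_n[f_m]+\widehat N_{n,m}\bigr)+\widehat R_{n,m}.
\]
Combining: the rank term costs $\le 2c(m)n$ in each of $n^\pm$; the norm term costs $\le\#\{j:|\lambda_j(Y_nT_n[f_m])|\le\omega(m)\}=\#\{j:\sigma_j(T_n[f_m])\le\omega(m)\}$ in each of $n^\pm$ (Weyl); and $|n^+(Y_nT_n[f_m])-n^-(Y_nT_n[f_m])|\le t(f_m)$ by the model case. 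Hence
\[
\tfrac1n\bigl|n^+(Y_nT_n[f])-n^-(Y_nT_n[f])\bigr|\le 4c(m)+\tfrac2n\#\{j:\sigma_j(T_n[f_m])\le\omega(m)\}+\tfrac{t(f_m)}{n}.
\]
Letting $n\to\infty$ kills the last term, and since $\{T_n[f_m]\}_n\sim_\sigma f_m$ the middle term has $\limsup_n$ at most a constant times $\mu\{\theta:|f_m(\theta)|\le 2\omega(m)\}$ (evaluate the distribution function at a continuity point in $(\omega(m),2\omega(m))$). Finally letting $m\to\infty$: $c(m)\to0$, while $\mu\{|f_m|\le 2\omega(m)\}\to0$ because $\omega(m)\to0$, $f_m\to f$ in measure, and $f$ is sparsely vanishing (so $\mu\{|f|\le\eta\}\downarrow\mu\{f=0\}=0$). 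Therefore $\limsup_n\tfrac1n|n^+(Y_nT_n[f])-n^-(Y_nT_n[f])|=0$, which is the claim.

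\textbf{Main obstacle.} The delicate point is exactly this iterated limit: the model bound $t(f_m)$ is $O(1)$ in $n$ but grows with $m$ (through $\deg f_m$); the rank correction $c(m)n$ is $o(n)$ only \emph{after} $m$ is frozen; and the count of small singular values of $T_n[f_m]$ must be forced to zero \emph{uniformly enough} in $m$ — which is where sparse vanishing of the limit $f$, rather than of the individual $f_m$, enters. Choosing $f_m$ as Fejér means (so that $L^1$-, hence in-measure convergence is guaranteed, with real Fourier coefficients preserved) and ordering the two limits correctly is the crux; the rest is soft bookkeeping.
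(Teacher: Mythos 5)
Your proof of the first part (singular value distribution, symmetry) is the same as the paper's — both observe that $Y_n$ is orthogonal so singular values are unchanged — and both are essentially trivial given Theorem~\ref{lem:main}.

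For the substantive inertia estimate you take a genuinely different route. The paper splits $Y_nT_n[f]$ (for $n=2\nu$) into a $2\times2$ block form whose diagonal blocks are Hankel matrices $Y_\nu H_\nu[f,+]Y_\nu$ and $H_\nu[f,-]$ and whose anti-diagonal blocks are both $Y_\nu T_\nu[f]$. Invoking the Fasino--Tilli theorem (that Hankel sequences generated by $L^1$ symbols are zero-distributed) it absorbs the diagonal blocks into a sequence $\{R_n\}_n\sim_\lambda 0$, and observes that the remaining anti-block-diagonal matrix has spectrum exactly $\pm\sigma_j(T_\nu[f])$, perfectly balanced; Cauchy interlacing plus sparse vanishing finishes. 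You instead go through the circulant: you compute the retrocirculant $Y_nC_n[p]$ explicitly, noting that on the invariant $2$-planes $\mathrm{span}\{v_j,v_{n-j}\}$ it acts as an off-diagonal $2\times2$ block with eigenvalues $\pm|\hat p(\theta_j)|$, leaving at most the two self-conjugate modes $j=0$ and (if $n$ even) $j=n/2$ unpaired, so $|n^+(Y_nC_n[p])-n^-(Y_nC_n[p])|\le 2$; a Hermitian rank-$\le 2d$ corner correction then gives $|n^+(Y_nT_n[p])-n^-(Y_nT_n[p])|\le 4d+2$, uniform in $n$. You then pass to $f\in L^1$ via Fejér means and the a.c.s.\ machinery, symmetrizing the error terms and controlling the inertia through the three pieces (low rank, small norm via Weyl, the polynomial bound) with the iterated $n\to\infty$, $m\to\infty$ limit; the sparse vanishing of $f$ is invoked precisely to kill the ``small singular values of $T_n[f_m]$'' term uniformly in $m$. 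Both proofs are correct. Your approach avoids the external Fasino--Tilli Hankel result at the cost of a somewhat heavier limit bookkeeping, and as a side benefit it yields the explicit bound $4\deg p+2$ for trigonometric polynomial symbols, which is essentially the paper's remark~(\ref{improved}) derived from a different angle; the paper's block decomposition, by contrast, is shorter once the Hankel clustering result is granted, and displays the $\pm\sigma_j$ symmetry more transparently.
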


\begin{proof}
Showing the first part of the statement that $\{Y_nT_n[f]\}_n \sim_{\sigma}  f$ is in fact trivial. We recall the fact that premultiplying a matrix by a unitary matrix does not change its singular values. As $Y_n$ is a permutation matrix and we know from Theorem \ref{lem:main} or \cite{MR1481397,MR1671591} that $\{T_n[f]\}_n \sim_{\sigma}  f$ holds, we can readily conclude that $\{Y_nT_n[f]\}_n \sim_{\sigma}  f$. However, for the completeness of the theory of GLT sequences, we provide a detailed proof of the claim in the following. In contrast to the first part, showing the second part of the statement concerning the number of negative/positive eigenvalues of $Y_nT_n[f]$ is not as straightforward.\\
\\
{\bf Proof of the first part}\\
We first show that the theorem holds for trigonometric polynomials, and then show that the statement also holds for $f\in L^1([-\pi,\pi])$ based on an approximation result.

Given a trigonometric polynomial $p_M$ where $M$ is a nonnegative integer, we have
\[
 p_M(\theta) = \sum_{k=-M}^{k=M}\rho_k e^{\mathbf{i}k\theta}.
\]We consider Strang's circulant matrix for $p_M$
\[
C_n[p_M]=\sum_{k=-M}^{M}\rho_k \Pi_n^{k}
\]where
\[
\Pi_n=\begin{bmatrix}{}
0&&&1\\
1&\ddots&&\\
&\ddots&\ddots&\\
&&1&0
\end{bmatrix} \in \mathbb{R}^{n \times n}
\] is the basic circulant matrix. As shown in the proof of Theorem 6.5 in \cite{MR3674485}, $\{C_n[p_M]\}_n \sim_{\sigma} p_M$ holds and this fact will be used in the later step of this proof.

We are now to prove our claim on $\{Y_nT_n[p_M]\}_n$. Recalling the definition of absolute circulant matrices, we have $|C_n[p_M]|=F_n|\Lambda_n[p_M]|F_n^*$. Thus
\begin{eqnarray}\nonumber
Y_nC_n[p_M]&=&Y_nF_n\Lambda_n[p_M]F_n^*\\\nonumber
&=&Y_n\underbrace{F_n\widetilde{\Lambda}_n[p_M] F_n^*}_{\widetilde{C}_n[p_M]} F_n|\Lambda_n[p_M]|F_n^*\\\nonumber
&=&\underbrace{Y_n\widetilde{C}_n[p_M]}_{Q_n[p_M]} |C_n[p_M]|\\\nonumber
&=&Q_n[p_M]|C_n[p_M]|,
\end{eqnarray}
where $\widetilde{\Lambda}_n[p_M]$ is the diagonal matrix having the signs of eigenvalues of ${\Lambda}_n[p_M]$ as its eigenvalues.

Note that all circulant matrices are normal, i.e. they commute and $Y_n|C_n|$ is symmetric for a real $|C_n|$, i.e. $Y_n|C_n|=(Y_n|C_n|)^T=|C_n|^T Y_n^T=|C_n|Y_n.$ Therefore,
\begin{eqnarray}\nonumber
Q_n[p_M]|C_n[p_M]|&=&Y_n\widetilde{C}_n[p_M]|C_n[p_M]|\\\nonumber
&=&Y_n|C_n[p_M]|\widetilde{C}_n[p_M]\\\nonumber
&=&|C_n[p_M]|Y_n\widetilde{C}_n[p_M]\\\nonumber
&=&|C_n[p_M]|Q_n[p_M],
\end{eqnarray}
namely $Q_n[p_M]$ and $|C_n[p_M]|$ commute. Moreover, both $Q_n[p_M]$ and $|C_n[p_M]|$ are symmetric: as $\widetilde{C}_n[p_M]$ is a real circulant matrix, $Q_n[p_M]=Y_n \widetilde{C}_n[p_M]$ is symmetric; $|C_n[p_M]|$ is symmetric by definition. Therefore, by Theorem \ref{lem:cmsd}, there exists a unitary matrix $U_n$ such that both $U_n^{*}Q_n[p_M]U_n$ and $U_n^{*}|C_n[p_M]|U_n$ are diagonal.

Furthermore, $Q_n[p_M]$ is orthogonal as
\begin{eqnarray}\nonumber
Q_n[p_M]^T Q_n[p_M]&=&(Y_n \widetilde{C}_n[p_M])^T(Y_n \widetilde{C}_n[p_M])\\\nonumber
&=&\widetilde{C}_n[p_M]^TY_n^TY_n \widetilde{C}_n[p_M]\\\nonumber
&=&\widetilde{C}_n[p_M]^T \widetilde{C}_n[p_M]\\\nonumber
&=&(F_n\widetilde{\Lambda}_n[p_M] F_n^*)^*(F_n\widetilde{\Lambda}_n[p_M] F_n^*)\\\nonumber
&=&F_n(\widetilde{\Lambda}_n[p_M])^* F_n^*F_n\widetilde{\Lambda}_n[p_M] F_n^*\\\nonumber
&=&F_n|\widetilde{\Lambda}_n[p_M]|^2 F_n^*\\\nonumber
&=&F_n F_n^*\\\nonumber
&=&I_n.
\end{eqnarray}

With $Q_n[p_M]$ being both symmetric and orthogonal, we have $Q_n[p_M]=U_n\Sigma[p_M] U_n^{*}$ where $\Sigma_n[p_M]$, having only eigenvalues $\pm 1$, is the diagonal matrix in the eigendecomposition of $Q_n[p_M]$.

As a result,
\begin{eqnarray}\nonumber
Y_nC_n[p_M]&=&Q_n[p_M]|C_n[p_M]|\\\nonumber
&=&U_n\Sigma_n[p_M] U_n^{*} U_n|\Lambda_n[p_M]| U_n^{*}\\
&=&U_n\underbrace{\Sigma_n[p_M]|\Lambda_n[p_M]|}_{\Upsilon_n[p_M]} U_n^{*}\label{eqn:spectral_YC-1}
\end{eqnarray}
where $\Upsilon_n[p_M]$ is the diagonal matrix in the eigendecomposition of $Y_nC_n[p_M]$, having only eigenvalues $\pm |\lambda_j|$ with $\lambda_j$ being the $j$-th eigenvalue of $C_n[p_M]$. The eigenvalues of $Y_nC_n[p_M]$ are therefore all of the form $\pm |\lambda_j|$ whilst its singular values are of $|\lambda_j| $.

Recalling $\{C_n[p_M]\}_n \sim_{\sigma}  p_M$, we consequently have $\{|C_n[p_M]|\}_n \sim_{\sigma}  p_M$ as the singular values of $C_n[p_M]$ are the same as those of $|C_n[p_M]|$. From (\ref{eqn:spectral_YC-1}), the singular values of $Y_nC_n[p_M]$ are the same as those of $|C_n[p_M]|$. Thus, we have
\begin{equation}\label{enq:YCpolysing}
\{Y_nC_n[p_M]\}_n \sim_{\sigma} p_M.
\end{equation}

Now, we are going to prove
$\{Y_nC_n[p_M]\}_n\xrightarrow{\text{a.c.s.\ wrt\ $m$}}\{Y_nT_n[p_M]\}.$

As shown in the proof of Theorem 6.5 of \cite{MR3674485}, we readily have 
\[
\{C_n[p_M]\}_n\xrightarrow{\text{a.c.s.\ wrt\ $m$}}\{T_n[p_M]\},
\]
 namely for every $m$ there exists $n_m$ such that for $n \geq n_m$
\begin{equation}\label{eqn:TCRN}
T_n[p_M]=C_{n,m}[p_M]+R_{n,m}[p_M]+N_{n,m}[p_M],
\end{equation}
\[
\text{rank}~R_{n,m}[p_M]\leq c(m)n\ \ \text{and}\ \ \|N_{n,m}[p_M]\|\leq\omega(m),
\]
where $n_m$, $c(m)$ and $\omega(m)$ depend only on $m$, and
\[
\lim_{m\to\infty}c(m)=\lim_{m\to\infty}\omega(m)=0.
\]

Multiplying the both sides of (\ref{eqn:TCRN}) by $Y_n$, we have for every $m$ there exists $n_m$ such that for $n \geq n_m$
\[
Y_nT_n[p_M]=Y_nC_{n,m}[p_M]+Y_nR_{n,m}[p_M]+Y_nN_{n,m}[p_M],
\]
\[
\text{rank}(Y_nR_{n,m}[p_M])=\text{rank}~R_{n,m}[p_M]\leq c(m)n,
\]
and
\[
\|Y_nN_{n,m}[p_M]\|=\|N_{n,m}[p_M]\|\leq\omega(m).
\] Therefore, by Definition \ref{def:ACS}, we have

\begin{equation}\label{eqn:YCACS}
\{Y_nC_n[p_M]\}_n\xrightarrow{\text{a.c.s.\ wrt\ $m$}}\{Y_nT_n[p_M]\}.
\end{equation}

Hence, combining (\ref{enq:YCpolysing}) and (\ref{eqn:YCACS}), by Lemma \ref{lem:Corollary5.1} we have
\begin{equation}\label{enq:YTpolysing}
\{Y_nT_n[p_M]\}_n \sim_{\sigma}  p_M.
\end{equation}

As the set of trigonometric polynomials is dense in $L^1([-\pi,\pi])$, there exists a sequence of trigonometric polynomials $\{p_M\}_M$ such that $p_M \to f$ in $L^1([-\pi,\pi])$.

As shown in the proof of Theorem 6.5 \cite{MR3674485},
$\{T_n[p_M]\}_n\xrightarrow{\text{a.c.s.\ wrt\ $M$}}\{T_n[f]\}$ holds. Equivalently, we have
\begin{equation}\label{enq:YTacs}
\{Y_nT_n[p_M]\}_n\xrightarrow{\text{a.c.s.\ wrt\ $M$}}\{Y_nT_n[f]\}.
\end{equation}

With (\ref{enq:YTpolysing}) and (\ref{enq:YTacs}), we conclude by Lemma \ref{lem:Corollary5.1} that $$\{Y_nT_n[f]\}_n \sim_{\sigma}  f.$$
\\
{\bf Proof of the second part}\\
We let $H_\nu[f,-]$ be the $\nu\times \nu$ Hankel matrix generated by $f$ containing the negative Fourier coefficients from $a_{-1}$ in position $(1,1)$ to $a_{-2\nu+1}$ in position $(\nu,\nu)$. Analogously, we let $H_\nu[f,+]$ be the $\nu\times \nu$ Hankel matrix generated by $f$ containing the positive Fourier coefficients from $a_{1}$ in position $(1,1)$ to $a_{2\nu-1}$ in position $(\nu,\nu)$.

We start by considering the case of even $n$ and writing $Y_nT_n[f]$ as a $2\times 2$ block matrix of size $n=2\nu$, i.e.
\[
Y_nT_n[f] = \left[\begin{array}{cc}
      Y_\nu H_\nu[f,+] Y_\nu   & Y_\nu T_\nu[f] \\
      Y_\nu T_\nu[f]  & H_\nu[f,-] \end{array}\right].
\]
It is worth noticing that for Lebesgue integrable $f$, $H_\nu[f,+]$ is exactly the Hankel matrix generated by $f$ according
to the definition given in \cite{FasinoTilli}: in that paper it was proven that $\{H_\nu[f,+]\}_n\sim_{\sigma}  0$. Since in our setting  $H_\nu[f,+]$  is symmetric for every $\nu$, it follows that  $\{H_\nu[f,+]\}_n\sim_{\lambda}  0$. Hence, with $Y_\nu$ being both symmetric and orthogonal, we have
\[
\{Y_\nu H_\nu[f,+] Y_\nu\}_n\sim_{\lambda,\sigma}  0.
\]
Similarly, we have
\[
 \{ H_\nu[f,-] \}_n\sim_{\lambda,\sigma}  0
\]
as $H_\nu[f,-]=H_\nu[\bar f,+]$ and $\bar f$ (being the conjugate of $f$) is Lebesgue integrable if and only if $f$ is Lebesgue integrable.

Therefore, the matrix sequence $\{Y_nT_n[f]\}_n$ can be written as the sum of a matrix sequence whose eigenvalues are clustered at zero that is
\[
\{ R_n \}_n =\left\{
\left[\begin{array}{cc}
      Y_\nu H_\nu[f,+] Y_\nu   &  O\\
      O  & H_\nu[f,-] \end{array}\right]\right\}_n
\]
and a matrix sequence
\[
\left\{
\left[\begin{array}{cc}
      O  &  Y_\nu T_\nu[f] \\
      Y_\nu T_\nu[f]   & O \end{array}\right]
\right\}_n
\]
whose eigenvalues are $\pm \sigma_j(Y_\nu T_\nu[f])=\pm \sigma_j( T_\nu[f])$,
$j=1,\ldots,\nu$. In fact, if we consider the singular value decomposition $Y_\nu T_\nu[f] = U \Sigma V^*=V \Sigma U^*$, where $U^*U=V^*V=I$ and the diagonal matrix $\Sigma$ with the singular values of $Y_\nu T_\nu[f]$, then
\[
\left[\begin{array}{cc}
      O  &  Y_\nu T_\nu[f] \\
      Y_\nu T_\nu[f]   & O \end{array}\right]
= \left[\begin{array}{cc}
      U &  O \\
      O & V \end{array}\right]
     \left[\begin{array}{cc}
      O  &  \Sigma \\
     \Sigma   & O \end{array}\right]
    \left[\begin{array}{cc}
      U^* &  O \\
      O & V^* \end{array}\right]
\]
which is similar to
\[
\left[\begin{array}{cc}
      \Sigma &  O \\
      O & -\Sigma \end{array}\right].
\]
Owing to the relation $\{Y_nT_n[f]\}_n \sim_{\sigma}  f$ and the assumption that $f$ is sparsely vanishing, the desired result follows by using the classical Cauchy interlacing theorem, Theorem \ref{lem:sparsely_vanishing}, and the relation $\{ R_n \}_n\sim_{\lambda}  0$.

In the case where $n$ is odd, the analysis is of the same type as before with a few slight modifications.

		By setting $\nu=\lfloor n/2 \rfloor$ and $\mu= \lceil n/2 \rceil$, we have
		\[
		Y_nT_n[f] = \left[\begin{array}{ccc}
		Y_\nu H_\nu[f \cdot e^{-\mathbf{i}\theta},+] Y_\nu & v   & Y_\nu T_\nu[f] \\
		v^T & a_0 & w^T \\
		Y_\nu T_\nu[f]  & w & H_\nu[f \cdot e^{\mathbf{i}\theta},-] \end{array}\right],
		\]
		provided that $n\neq 1$.
		Therefore, the matrix sequence $\{Y_nT_n[f]\}_n$ can be written as the sum of the matrix sequence whose eigenvalues are clustered at zero, that is $\{ E_n \}_n$, where $E_n=E_n'+E_n''$ with
		\[
		E_n' =
		\left[\begin{array}{cc}
		Y_\mu H_\mu[f\cdot e^{\mathbf{i}  \theta },+] Y_\mu   &  O\\
		O  & H_\nu[f \cdot e^{\mathbf{i}\theta},-] \end{array}\right],
		\]
		\[
		Y_\mu H_\mu[f\cdot e^{\mathbf{i}  \theta },+] Y_\mu =
		\left[\begin{array}{cc}
		Y_\nu H_\nu[f \cdot e^{-\mathbf{i}\theta},+] Y_\nu & v   \\
		v^T & a_0
		\end{array}\right],
		\]
		\[
		E_n'' =
		\left[\begin{array}{ccc}
		O & {\bf 0}   & O \\
		{\bf 0}^T & 0 & w^T \\
		O  & w & O \end{array}\right],
		\]
		and the matrix sequence
		\[
		\left\{
		\left[\begin{array}{ccc}
		O  &  {\bf 0} & Y_\nu T_\nu[f] \\
		{\bf 0}^T & 0 &  {\bf 0}^T \\
		Y_\nu T_\nu[f] & {\bf 0}   & O \end{array}\right]
		\right\}_n
		\]
		whose eigenvalues are $0$ with multiplicity $1$ and $\pm \sigma_j(Y_\nu T_\nu[f])$,
		$j=1,\ldots,\nu$. Note that we have $\sigma_j(Y_\nu T_\nu[f])= \sigma_j( T_\nu[f])$,
		$j=1,\ldots,\nu$, again from the singular value decomposition of $Y_\nu T_\nu[f]$. Owing to the relation $\{Y_nT_n[f]\}_n \sim_{\sigma}  f$ and the assumption that $f$ is sparsely vanishing, the desired result follows by using the classical Cauchy interlacing theorem, Theorem \ref{lem:sparsely_vanishing}, and the relation $\{ R_n \}_n\sim_{\lambda}  0$. \qed
\end{proof}

In other words, the singular value distribution of $\{Y_nT_n[f]\}_n$ is described by $|f|$.  Also, as $Y_nT_n[f]$ is symmetric, its eigenvalues are essentially distributed as $\pm |f|$ asymptotically with roughly half of them being negative/positive. Consequently, provided that $f$ is sparsely vanishing, $Y_nT_n[f]$ is indefinite for sufficiently large $n$ by Theorem \ref{thm:main_theorem}. 

We observe that in the case where $f$ is a trigonometric polynomial with real coefficients and not identically zero, we can say a bit more. In that case, by the fundamental theorem of algebra, the function $f$ is guaranteed to be sparsely vanishing. In addition, every matrix of the perturbing sequence $\{ R_n \}_n$ has a rank bounded by a constant independent of $n$ and linearly depending on the degree of $f$. As a conclusion, by following the same steps of our main theorem, we deduce
\begin{equation}\label{improved}
|n^{+}(Y_nT_n[f])-n^{-}({Y_nT_n[f]})|=O(1).
\end{equation}

\section{Numerical examples}

To illustrate the second part of Theorem \ref{thm:main_theorem}, we provide the following numerical examples, where for the computation we use the Matlab \textbf{eig} function. Figure \ref{fig:Bi_YT_n} shows the spectral distribution of $Y_nT_n[f]$, where $$T_n[f]=\begin{bmatrix}{}
2 & & &   \\
1 & 2 &  &    \\
& \ddots& \ddots &  \\
&   & 1 & 2
\end{bmatrix}\in \mathbb{R}^{n \times n}$$ generated by $f(\theta)=2+e^{\mathbf{i}\theta}$. Figure \ref{fig:Gracr_YT_n} shows the spectral distribution of $Y_nT_n[f]$, where \[
T_n[f]=\begin{bmatrix}{}
1 & 1 & 1 & 1&  &  \\
-1 & \ddots & \ddots &\ddots & \ddots  &  \\
& \ddots &\ddots & \ddots & \ddots & 1 \\
& &\ddots & \ddots & \ddots & 1 \\
& & & \ddots & \ddots & 1 \\
& & & & -1 & 1
\end{bmatrix}\in \mathbb{R}^{n \times n}
\] being a Grcar matrix. At last, Figure \ref{fig:Thrid_YT_n} shows the spectral distribution of $Y_nT_n[f]$, where \[
T_n[f]=\begin{bmatrix}{}
-4 & 6 & -4 & 1&  &  \\
1 & \ddots & \ddots &\ddots & \ddots  &  \\
& \ddots &\ddots & \ddots & \ddots & 1 \\
& &\ddots & \ddots & \ddots & -4 \\
& & & \ddots & \ddots & 6 \\
& & & & 1 & -4
\end{bmatrix}\in \mathbb{R}^{n \times n}
\] generated by $f(\theta)=e^{-3\mathbf{i}\theta}-4e^{-2\mathbf{i}\theta}+6e^{-\mathbf{i}\theta}-4+e^{\mathbf{i}\theta}$. From all figures, we observe that the spectral distribution of $Y_nT_n[f]$ behaves as described by our theorem. Moreover, considering the same Toeplitz matrices, Table \ref{tab:YT_n_eigenvalues} verifies our predicted number of negative/positive eigenvalues of $Y_nT_n[f]$ at different $n$.

From the previous examples, we might argue that for even $n$ the number of negative eigenvalues is exactly equal to that of positive eigenvalues so that Theorem  \ref{thm:main_theorem} can be made more precise. In fact, this is not always the case and indeed our theorem is sharp as illustrated below.

We consider the following matrix
	$$ 
	T_n[1+6 \cos{\theta}] = \left[
	\begin{array}{cccc}
		1&3&&\\
		3&\ddots&\ddots&\\
		&\ddots&\ddots&3\\
		&&3&1 
	\end{array} \right] \in \mathbb{R}^{n \times n}.
	$$
Premultiplying it by $Y_n$ gives
	$$ 
	Y_n T_n[1+6 \cos{\theta}] = \left[
	\begin{array}{cccc}
		&&3&1\\
		&\iddots&\iddots&3\\
		3&\iddots&\iddots&\\
		1&3&& 
\end{array} \right]\in \mathbb{R}^{n \times n}
	$$
and the real symmetric matrix $R_n$ considered in Theorem \ref{thm:main_theorem} has rank $2$ with $2$ positive eigenvalues and $n-2$ zero eigenvalues. The reasoning is supported by the numeric: the eigenvalues for $n=6$  are
$$
\Lambda(Y_6 T_6[1+6 \cos{\theta}])= \left( \begin{array}{c}
-4.740938811152401\\
-2.740938811152402\\
0.335125603737888\\
2.335125603737888\\
4.405813207414513\\
6.405813207414515
\end{array} \right)
$$ 
and hence $n^{+}(Y_6 T_6[1+6 \cos{\theta}])-n^{-}(Y_6 T_6[1+6 \cos{\theta}]) = 2$. We emphasize that such relation is confirmed also for $n=100$ and $n=200$ so that
$$
n^{+}(Y_nT_n[1+6 \cos{\theta}])-n^{-}(Y_n T_n[1+6 \cos{\theta}]) = 2
$$
is in perfect accordance with relation (\ref{improved}). The fact that $R_n$ is positive semidefinite is the key ingredient in forcing the number of positive eigenvalues to be larger than that of negative eigenvalues and hence is a recipe for constructing specific examples of this kind.

\begin{figure}[!htb]
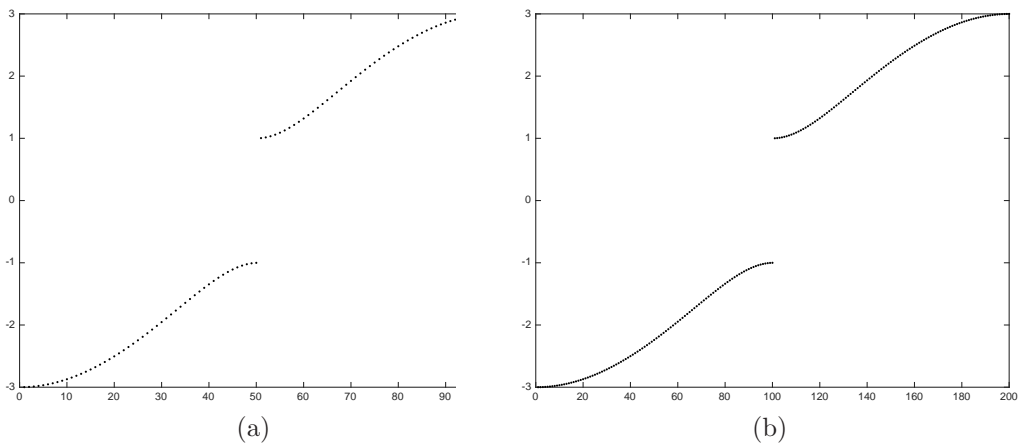

\centering
\subfloat[]{\includegraphics[scale=0.4]{Bi_100.eps}}~
\subfloat[]{\includegraphics[scale=0.4]{Bi_200.eps}}
\caption{\footnotesize Spectral distribution of $Y_nT_n[f]$ with $T_n[f]$ generated by $f(\theta)=2+e^{\mathbf{i}\theta}$ at (a) $n=100$ and (b) $n=200$.} 
\label{fig:Bi_YT_n}
\end{figure}

\begin{figure}[!htb]
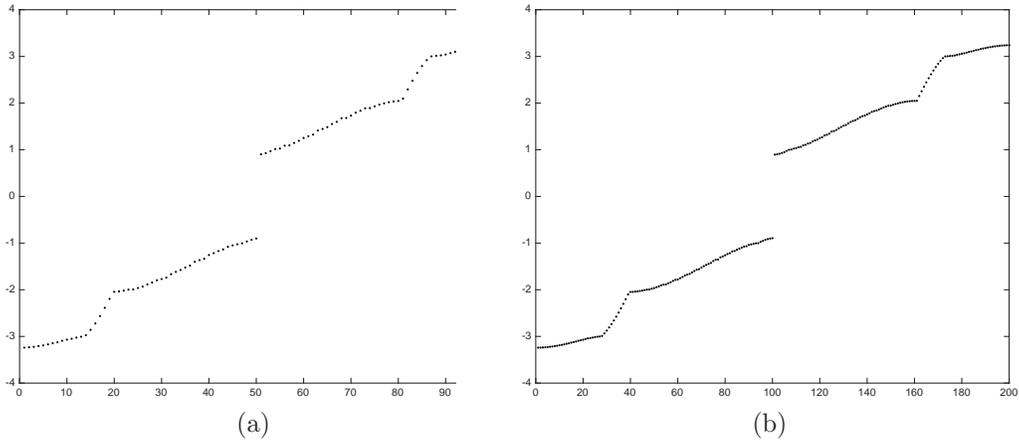

\centering
\subfloat[]{\includegraphics[scale=0.4]{Grcar_100.eps}}~
\subfloat[]{\includegraphics[scale=0.4]{Grcar_200.eps}}
\caption{\footnotesize Spectral distribution of $Y_nT_n[f]$ with $T_n[f]$ being a Grcar matrix at (a) $n=100$ or (b) $n=200$.} 
\label{fig:Gracr_YT_n}
\end{figure}

\begin{figure}[!htb]
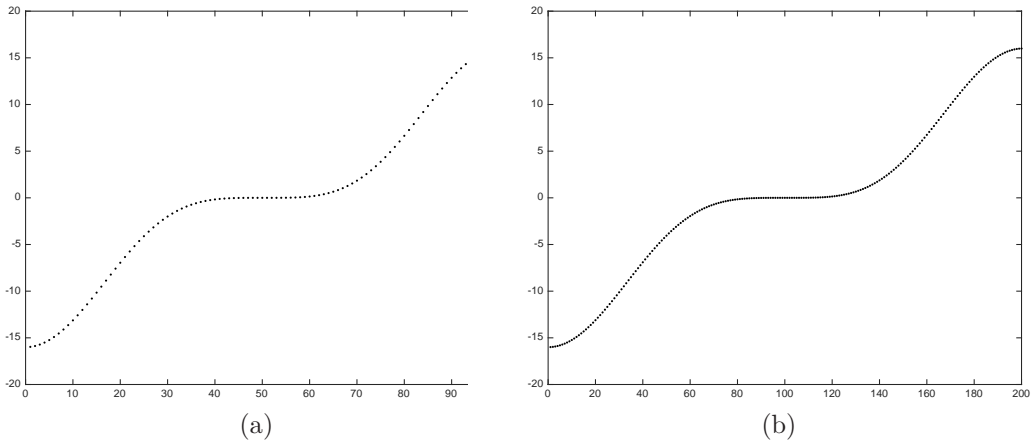

\centering
\subfloat[]{\includegraphics[scale=0.4]{Third_100.eps}}~
\subfloat[]{\includegraphics[scale=0.4]{Third_200.eps}}
\caption{\footnotesize Spectral distribution of $Y_nT_n[f]$ with $T_n[f]$ generated by $f(\theta)=e^{-3\mathbf{i}\theta}-4e^{-2\mathbf{i}\theta}+6e^{-\mathbf{i}\theta}-4+e^{\mathbf{i}\theta}$ at (a) $n=100$ or (b) $n=200$.} 
\label{fig:Thrid_YT_n}
\end{figure}

\begin{table}[!htb]
\caption{Numbers of negative/positive eigenvalues of $Y_nT_n[f]$ with (a) $T_n[f]$ generated by $f(\theta)=2+e^{\mathbf{i}\theta}$, (b) $T_n[f]$ being a Grcar matrix, or (c) $T_n[f]$ generated by $f(\theta)=e^{-3\mathbf{i}\theta}-4e^{-2\mathbf{i}\theta}+6e^{-\mathbf{i}\theta}-4+e^{\mathbf{i}\theta}$.}
\label{tab:YT_n_eigenvalues}
\begin{center}
{
(a)~\begin{tabular}{|l|c|c|}\hline
 $n $ & No. of negative eigenvalues  & No. of positive eigenvalues
\\
\hline
100 & 50 & 50 \\
200 & 100 & 100 \\
\hline
\end{tabular}\\
\vspace{0.5cm}
(b)~\begin{tabular}{|l|c|c|}\hline
 $n $ & No. of negative eigenvalues  & No. of positive eigenvalues
\\
\hline
100 & 50 & 50 \\
200 & 100 & 100 \\
\hline
\end{tabular}\\
\vspace{0.5cm}
(c)~\begin{tabular}{|l|c|c|}\hline
 $n $ & No. of negative eigenvalues  & No. of positive eigenvalues
\\
\hline
100 & 50 & 50 \\
200 & 100 & 100 \\
\hline
\end{tabular}
}
\end{center}
\end{table}

\section{Conclusions}

We have provided a theorem that describes the singular and spectral distribution of $\{Y_nT_n[f]\}_n$. Our result shows that the symmetrized matrix $Y_nT_n[f]$, which is in fact a class of Hankel matrices, is always indefinite for sufficiently large $n$. More precisely, asymptotically speaking, for sparsely vanishing $f$ there are always roughly half of the eigenvalues of $Y_nT_n[f]$ that are negative/positive.

As already considered in \cite{doi:10.1137/140974213}, such a sign structure of the resulting matrices suggests that a good Krylov subspace method is the preconditioned MINRES method.

\section*{References}

\bibliographystyle{plain}

\end{document}